%------------------------------------------------------------------------------
% Beginning of journal.tex
%------------------------------------------------------------------------------
%
% AMS-LaTeX version 2 sample file for journals, based on amsart.cls.
%
%        ***     DO NOT USE THIS FILE AS A STARTER.      ***
%        ***  USE THE JOURNAL-SPECIFIC *.TEMPLATE FILE.  ***
%
% Replace amsart by the documentclass for the target journal, e.g., tran-l.
%
\documentclass{amsart}

%     If your article includes graphics, uncomment this command.
\usepackage{graphicx}

\newtheorem{theorem}{Theorem}[section]
\newtheorem{lemma}[theorem]{Lemma}
\newtheorem{corollary}[theorem]{Corollary}
\theoremstyle{definition}
\newtheorem{definition}[theorem]{Definition}

\newtheorem{remark}[theorem]{Remark}

\numberwithin{equation}{section}

%    Absolute value notation

%    Blank box placeholder for figures (to avoid requiring any
%    particular graphics capabilities for printing this document).

\begin{document}

\title{Periodic sequences modulo $m$}

%    Information for second author
\author{Alexandre Laugier}
\address{Lyc{\'e}e professionnel Tristan Corbi{\`e}re, 16 rue de Kerv{\'e}guen - BP 17149, 29671 Morlaix cedex, France}
\email{laugier.alexandre@orange.fr}

%    Information for first author
\author{Manjil P. Saikia}
%    Address of record for the research reported here
\address{Fakult\"at f\"ur Mathematik, Universit\"at Wien, Oskar-Morgenstern-Platz 1, 1090 Wien, Austria}
\email{manjil.saikia@univie.ac.at}
%    \thanks will become a 1st page footnote.

%    General info
\subjclass[2010]{Primary 11B50; Secondary 11A07, 11B65.}

\date{\today}

\keywords{prime moduli, binomial coefficients, periodic sequences modulo $m$.}

\newcommand{\llbracket}{[[}
\newcommand{\rrbracket}{]]}

\begin{abstract}
We give a few remarks on the periodic sequence $a_n=\binom{n}{x}~(mod~m)$ where $x,m,n\in \mathbb{N}$, which is periodic with minimal length of the period being $$
\ell(m,x)={\displaystyle\prod^w_{i=1}p^{\lfloor\log_{p_i}x\rfloor+b_i}_i}
=m{\displaystyle\prod^w_{i=1}p^{\lfloor\log_{p_i}x\rfloor}_i}
$$ where  $m=\prod^w_{i=1}p^{b_i}_i$. We prove certain interesting properties of $\ell(m,x)$ and derive a few other results and congruences.
\end{abstract}

\maketitle

\section{{Introduction and Preliminaries}}

This paper deals with the periodicity of binomial coefficients which have previously been studied by many mathematicians, \cite{fibo} and \cite{zabek} are some examples of results obtained in this direction. The authors in \cite{mps} stated and proved the following theorem.

\begin{theorem}\label{mps-jv}
A natural number $p>1$ is a prime if and only if $\binom{n}{p}-\lfloor\frac{n}{p}\rfloor$ is divisible by $p$ for every non-negative $n$, where $n>p+1$ and the symbols have their usual meanings.
\end{theorem}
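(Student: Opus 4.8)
The plan is to prove the biconditional by treating the two implications separately, the central quantity in both cases being the residue of $\binom{n}{p}$ modulo $p$ measured against $\lfloor n/p\rfloor$. Throughout I set $f(n)=\binom{n}{p}-\lfloor n/p\rfloor$, so the claim is that $p$ is prime if and only if $p\mid f(n)$ for all $n>p+1$; I will in fact establish the sharper fact that for prime $p$ the congruence $\binom{n}{p}\equiv\lfloor n/p\rfloor\pmod p$ holds for every $n\ge 0$.

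For the forward direction (if $p$ is prime then $p\mid f(n)$), the cleanest route is Lucas' theorem. Write $n$ in base $p$ as $n=\sum_{i\ge 0}n_i p^i$ and observe that $p$ itself has base-$p$ expansion $p=1\cdot p^1+0\cdot p^0$. Lucas' theorem then gives $\binom{n}{p}\equiv\binom{n_0}{0}\binom{n_1}{1}\prod_{i\ge 2}\binom{n_i}{0}=n_1\pmod p$, while $\lfloor n/p\rfloor=\sum_{i\ge 1}n_i p^{i-1}\equiv n_1\pmod p$ because every term with $i\ge 2$ carries a factor of $p$. Hence $\binom{n}{p}\equiv\lfloor n/p\rfloor\pmod p$ and the divisibility follows. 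An equivalent self-contained argument uses Vandermonde's identity to show $\binom{n+p}{p}\equiv\binom{n}{p}+1\pmod p$, since $\binom{p}{k}\equiv 0\pmod p$ for $0<k<p$; combined with $\lfloor(n+p)/p\rfloor=\lfloor n/p\rfloor+1$ this makes $f$ periodic modulo $p$ with period $p$, and as $f$ vanishes identically on the block $0\le n<p$ we again get $p\mid f(n)$ for all $n$.

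For the reverse direction I argue by contraposition: assuming $p$ is composite I exhibit a single $n>p+1$ at which the divisibility fails. Let $q$ be the \emph{smallest} prime factor of $p$ and put $m=p/q$, so $1\le m<p$ and $2\le q<p$; take $n=p+q$, for which $\lfloor n/p\rfloor=1$. Using the factorisation $\binom{p+q}{p}=\binom{p+q}{q}=\frac{p+q}{q}\binom{p+q-1}{q-1}=(m+1)\binom{p+q-1}{q-1}$, it remains to evaluate $\binom{p+q-1}{q-1}$ modulo $p$. Because $q$ is the smallest prime divisor of $p$, each $j$ with $1\le j\le q-1$ is coprime to $p$, so from $(q-1)!\binom{p+q-1}{q-1}=\prod_{j=1}^{q-1}(p+j)\equiv\prod_{j=1}^{q-1}j=(q-1)!\pmod p$ and the invertibility of $(q-1)!$ modulo $p$ I deduce $\binom{p+q-1}{q-1}\equiv 1\pmod p$. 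Therefore $\binom{p+q}{p}\equiv m+1\pmod p$, and since $2\le m+1\le\tfrac{p}{2}+1<p$ we conclude $\binom{p+q}{p}\not\equiv 1\equiv\lfloor n/p\rfloor\pmod p$, i.e. $f(p+q)\equiv m\not\equiv 0\pmod p$.

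I expect the main obstacle to lie entirely in the reverse direction, the forward one being essentially a one-line application of Lucas. Two points there require care. The first is the reduction of $\prod_{j=1}^{q-1}(p+j)$ modulo $p$ to $(q-1)!$: this cancellation is exactly what forces the choice of $q$ as the smallest prime factor, since for any other prime divisor some $j\le q-1$ would share a factor with $p$ and the argument would break down. The second is checking that the residue $m+1$ genuinely differs from $1$ modulo $p$; this is where compositeness enters, guaranteeing $1\le m=p/q<p$ so that $m\not\equiv 0\pmod p$. A naive attempt to reduce the whole statement modulo $q$ instead of $p$ would be simpler but fails precisely when $q^2\mid p$ (for instance $p=9$), so working modulo $p$ throughout is essential.
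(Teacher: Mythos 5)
Your proof is correct, but there is nothing inside this paper to compare it against: Theorem \ref{mps-jv} is stated here without proof, being quoted from \cite{mps} with the proof completed in \cite{almps}. Judged on its own, your argument is complete. The forward direction (Lucas' theorem, or equivalently the Vandermonde step $\binom{n+p}{p}\equiv\binom{n}{p}+1\pmod p$ plus periodicity) is the standard route, and it is essentially the same digit-decomposition idea that underlies the paper's Theorem \ref{genl}, which generalizes the congruence $\binom{ap+b}{p}\equiv a\pmod p$ from $p$ to prime powers $p^k$. The reverse direction is where the real content lies, and your construction is sound: with $q$ the least prime factor of the composite $p$, $m=p/q$, and $n=p+q>p+1$, the integer identity $\binom{p+q}{p}=(m+1)\binom{p+q-1}{q-1}$ together with $(q-1)!\binom{p+q-1}{q-1}=\prod_{j=1}^{q-1}(p+j)\equiv(q-1)!\pmod p$ and the invertibility of $(q-1)!$ modulo $p$ (which is exactly where minimality of $q$ is used) yields $\binom{p+q}{p}-\lfloor(p+q)/p\rfloor\equiv m\not\equiv 0\pmod p$. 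It is worth noting that the choice $n=p+q$ is genuinely needed and not an arbitrary convenience: the more obvious candidate $n=2p$ fails in general, e.g., for $p=4$ one has $\binom{8}{4}-\lfloor 8/4\rfloor=68$, which is divisible by $4$, so a composite $p$ can satisfy the divisibility at some values of $n$ and any correct proof must select the test value carefully, as yours does.
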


The proof of Theorem \ref{mps-jv} was completed by Laugier and Saikia \cite{almps}. In this section we state without proof the following results which we shall be referring in the coming sections. The proofs can be found in \cite{mps2}.

\begin{definition}
 A sequence $(a_n)$ is said to be periodic modulo $m$ with period $k$ if there exists an integer $N>0$ such that for all $n>N$ $$a_{n+k}=a_n \pmod m.$$
\end{definition}

\noindent In the following, we shall use usual periodicity with $N=1$ unless otherwise mentioned.

\begin{theorem}\label{pmod}
The sequence $(a_n)=\binom{n}{x}~(mod~m)$ is periodic, where $x,m,n \in \mathbb{N}$.
\end{theorem}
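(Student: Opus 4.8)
The plan is to realize $a_n$ as a single coordinate of the orbit of an invertible linear map acting on a finite set, and then to invoke finiteness. Fix $x$ and, for each $n\in\mathbb{N}$, form the state vector
$$
v_n=\left(\binom{n}{0},\binom{n}{1},\ldots,\binom{n}{x}\right)\bmod m\in(\mathbb{Z}/m\mathbb{Z})^{x+1}.
$$
Pascal's rule $\binom{n+1}{j}=\binom{n}{j}+\binom{n}{j-1}$ (with the convention $\binom{n}{-1}=0$) shows that $v_{n+1}=Av_n$, where $A$ is the $(x+1)\times(x+1)$ lower-triangular matrix with $1$'s on the main diagonal and on the first subdiagonal. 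Hence $v_n=A^n v_0$ for all $n$.

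Since $\det A=1$, the matrix $A$ is invertible over $\mathbb{Z}/m\mathbb{Z}$, so $v\mapsto Av$ is a bijection of the finite set $(\mathbb{Z}/m\mathbb{Z})^{x+1}$, which has exactly $m^{x+1}$ elements. The iterates $v_0,v_1,v_2,\ldots$ therefore cannot all be distinct, and the pigeonhole principle yields indices $r<s$ with $v_r=v_s$. Applying $A^{-r}$ gives $v_0=v_{s-r}$, and then $v_n=A^nv_0=A^nv_{s-r}=v_{n+(s-r)}$ for every $n\ge0$. Reading off the final coordinate, $\binom{n}{x}\equiv\binom{n+(s-r)}{x}\pmod m$, so $a_n$ is (purely) periodic with period $k=s-r$, which proves Theorem \ref{pmod}.

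The only point requiring care is the passage from a single coincidence of state vectors to genuine periodicity: this works precisely because the transition matrix $A$ has unit entries, so the recurrence is deterministic in the forward direction and, being invertible, in the backward direction as well; the latter is what upgrades eventual periodicity (all the definition demands) to pure periodicity. An equivalent route avoids matrices altogether by noting that $\binom{n}{x}$ is an integer-valued polynomial of degree $x$ in $n$, so its $(x+1)$-th finite difference vanishes identically; reduced modulo $m$ this becomes a linear recurrence whose leading and trailing coefficients are $\pm1$, and the same finiteness argument applies to the window $(a_n,\ldots,a_{n+x})$. I expect the genuinely hard work to lie not here but in the companion result computing the exact minimal period $l(m,x)$, for which this crude pigeonhole bound gives no quantitative information.
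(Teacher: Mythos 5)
Your proposal is correct, and it proves something slightly stronger than the paper's definition requires, but it follows a genuinely different route. You linearize the problem: the window $v_n=\left(\binom{n}{0},\ldots,\binom{n}{x}\right)$ evolves under Pascal's rule by a unimodular bidiagonal matrix $A$, which is invertible over $\mathbb{Z}/m\mathbb{Z}$ because $\det A=1$, so the pigeonhole principle on the $m^{x+1}$ possible states forces $v_r=v_s$ for some $r<s$, and invertibility pulls this coincidence back to $v_0=v_{s-r}$, giving pure (not merely eventual) periodicity with some period $k\le m^{x+1}$. The paper instead proceeds by induction on $x$: the base case $x=0$ is the constant sequence $1$, and the inductive step uses the hockey-stick identity (Lemma \ref{lm1}) together with a block-summation argument (Lemma \ref{lm2}) to show that if $k$ is a period of $\binom{n}{x}\pmod m$ then $mk$ is a period of $\binom{n}{x+1}\pmod m$; unwinding the induction exhibits the explicit period $m^x$, which is slightly sharper than your pigeonhole bound $m^{x+1}$. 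Your argument is more self-contained and makes the finiteness mechanism transparent; the paper's argument, besides the sharper explicit period, develops summation machinery (Lemmas \ref{lm1}--\ref{lm3}) that is reused later in its study of the minimal period $\ell(m,x)$, which, as you note yourself, the crude pigeonhole argument cannot see. One point worth making explicit in your write-up: your vectors $v_n$ involve $\binom{n}{j}$ with $n<j$, so you are tacitly using the extension $\binom{n}{j}=0$ for $0\le n<j$; the paper adopts exactly this convention, and Pascal's rule remains valid under it, so the step $v_{n+1}=Av_n$ is sound for all $n\ge 0$.
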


\begin{theorem}\label{lpmod}
For a natural number $m=\prod^k_{i=1}p_i^{b_i}$, the sequence $a_n\equiv \binom{n}{m}~(mod~m)$ has a period of minimal length,

$$l(m)=\prod^k_{i=1}p_i^{\lfloor \log_{p_i}m \rfloor+b_i}.$$
\end{theorem}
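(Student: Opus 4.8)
The plan is to reduce the computation modulo $m$ to the prime powers dividing $m$ and then to translate periodicity into a statement about divisibility of binomial coefficients. First I would invoke the Chinese Remainder Theorem: identifying the residue of $a_n$ modulo $m=\prod_{i=1}^k p_i^{b_i}$ with the tuple of its residues modulo the $p_i^{b_i}$, the sequence repeats modulo $m$ exactly when each coordinate repeats, so the minimal period modulo $m$ is the least common multiple of the minimal periods modulo the individual $p_i^{b_i}$. Since the $p_i$ are distinct primes and I will show that each of these periods is a power of the corresponding prime, this least common multiple is precisely the claimed product $\prod_i p_i^{\lfloor\log_{p_i}m\rfloor+b_i}$. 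Everything therefore reduces to the single prime power case: the minimal period of $\binom{n}{m}$ modulo $p^b$ should equal $p^{\alpha+b}$, where $\alpha=\lfloor\log_p m\rfloor$ is fixed by $p^{\alpha}\le m<p^{\alpha+1}$.

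For the prime power case the crucial step is an exact criterion for periodicity. Using Vandermonde's identity I would write $\binom{n+N}{m}-\binom{n}{m}=\sum_{j=1}^m\binom{N}{j}\binom{n}{m-j}$. The polynomials $\binom{n}{0},\dots,\binom{n}{m-1}$ form a unitriangular $\mathbb{Z}$-basis of the integer-valued polynomials of degree below $m$: if $f(n)=\sum_i c_i\binom{n}{i}$ then $c_i=\sum_{t=0}^i(-1)^{i-t}\binom{i}{t}f(t)$ is an integer combination of values of $f$. Consequently $f\equiv 0\pmod{p^b}$ on the nonnegative integers forces every $c_i\equiv 0\pmod{p^b}$, and a short finite-difference argument shows that a congruence valid for all large $n$ is in fact valid for all $n$. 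Applying this to the displayed difference shows that $N$ is a period of $\binom{n}{m}$ modulo $p^b$ if and only if $p^b\mid\binom{N}{j}$ for every $1\le j\le m$.

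It then remains to locate the least such $N$. Here I would use the valuation formula $v_p\binom{p^s}{j}=s-v_p(j)$ for $1\le j\le p^s$, which follows at once from $\binom{p^s}{j}=\frac{p^s}{j}\binom{p^s-1}{j-1}$ together with $p\nmid\binom{p^s-1}{j-1}$ (equivalently from Kummer's carry-counting theorem), where $v_p$ denotes the $p$-adic valuation. Taking $N=p^{\alpha+b}$ gives $v_p\binom{p^{\alpha+b}}{j}=(\alpha+b)-v_p(j)\ge b$ for every $j\le m<p^{\alpha+1}$, since such $j$ satisfy $v_p(j)\le\alpha$; hence $p^{\alpha+b}$ is a period. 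For minimality I would use that the set of periods of an eventually periodic sequence is exactly the set of positive multiples of the minimal period, so the minimal period divides $p^{\alpha+b}$ and is a power of $p$; it thus suffices to check that $p^{\alpha+b-1}$ fails the criterion. Choosing $j=p^{\alpha}$, which lies in $\{1,\dots,m\}$ because $p^{\alpha}\le m$, yields $v_p\binom{p^{\alpha+b-1}}{p^{\alpha}}=b-1<b$, so $p^{\alpha+b-1}$ is not a period and the minimal period modulo $p^b$ is exactly $p^{\alpha+b}$.

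I expect the main obstacle to be the passage to the clean divisibility criterion rather than the arithmetic that follows. One must treat carefully that periodicity is an eventual rather than a universal condition, and justify rigorously that the vanishing of $\sum_{j=1}^m\binom{N}{j}\binom{n}{m-j}$ modulo $p^b$ for all $n$ forces each coefficient $\binom{N}{j}$ to vanish modulo $p^b$; this is what makes the integer-valued-polynomial basis and its unitriangularity indispensable. Once the criterion is established the valuation computations are routine, and the very same argument, with $x$ in place of $m$ and $\alpha_i=\lfloor\log_{p_i}x\rfloor$, yields the general formula $l(m,x)$ stated in the abstract.
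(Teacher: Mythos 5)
Your proposal is correct, but it follows a genuinely different route from the one the paper relies on. The paper states Theorem \ref{lpmod} without proof, deferring to \cite{mps2}; as recounted in Section 3, that argument works directly with the representation $\binom{n}{m}=\prod_{i=0}^{m-1}(n-i)/m!$: it first shows that any period must be a multiple of $\ell(m)=m\prod_i p_i^{\lfloor\log_{p_i}m\rfloor}$, and then that $\ell(m)$ really is a period, by tracking the $p_j$-adic ordinals $\vartheta_{p_j}(m)$ of $m!$ and counting how many of the $m$ consecutive integers $n,n-1,\ldots,n-m+1$ are divisible by prescribed powers of $p_j$. You replace this divisibility bookkeeping with structure: the Chinese Remainder Theorem reduces everything to a prime-power modulus; Vandermonde's identity together with the unitriangularity of the binomial basis of integer-valued polynomials converts ``$N$ is a period modulo $p^b$'' into the clean criterion $p^b\mid\binom{N}{j}$ for all $1\le j\le m$ (your care in passing from eventual to universal vanishing via the shifted finite-difference expansion is exactly what makes this step rigorous); and the valuation identity $v_p\binom{p^s}{j}=s-v_p(j)$ then settles in one stroke both that $p^{\alpha+b}$ is a period and, with the witness $j=p^{\alpha}$, that $p^{\alpha+b-1}$ is not. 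What your criterion buys is that existence and minimality of the period are handled by the same computation, and that the argument generalizes verbatim with $x$ in place of $m$, yielding Theorem \ref{glpmod} (the formula for $l(m,x)$) at no extra cost --- a statement the paper only asserts by reference. What the paper's approach buys is elementarity: it needs nothing beyond divisibility properties of factorials and consecutive integers, at the price of a heavier two-step structure and the ordinal counting.
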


\noindent Theorem \ref{lpmod} was also derived in \cite{lu}, however the motivation of that paper was quite different from \cite{mps2}.

The following generalization of Theorem \ref{mps-jv} was also proved in \cite{almps}.

\begin{theorem}\label{genl}
For natural numbers $n, k$ and a prime $p$ we have the following $$\binom{n}{p^k}-\displaystyle\left\lfloor \frac{n}{p^k}\displaystyle\right\rfloor \equiv 0~(\textup{mod}~p).$$
\end{theorem}

We also fix the notation $[[1,i]]$ for the set $\{1, 2, \ldots, i\}$ throughout the paper.

\begin{definition}\label{ord}
We define $\textup{ord}_p(n)$ for $n\in \mathbb{N}$ to be the greatest exponent of $p$ with $p$ a
prime in the
decomposition of $n$ into prime factors, $$\textup{ord}_p(n)=\max\left\{k\in\mathbb{N}\,:\,p^k|n\right\}.$$
\end{definition}

\section{{Results and Discussion}}

\subsection{{Remarks on Theorem \ref{pmod}}}

The integer $n$ in Theorem \ref{pmod} should be greater than $x$. Otherwise, the binomial
coefficient $\binom{n}{x}$ is not defined. But, we can
extend the definition of $\binom{n}{x}$ to integer $n$ such that
$0\leq n<x$ by setting $\binom{n}{x}=0$ if $0\leq n<x$. Nevertheless,
notice that this extension is not necessary in order to prove this
theorem about periodic sequences.

The case where $m=0$ is not possible since the sequence
$(\binom{n}{x})$ is not periodic modulo $0$ or is not simply periodic.
So, if $x=m$, $x$ should be non-zero.

If $x=0$, then we have
$$
a_n\equiv a_{n+1}\equiv\ldots\equiv a_{n+k}\equiv 1\pmod m
$$
for any integers $n$ and $k$. So, if $x=0$, the sequence $(a_n)$ is
periodic with minimal period equal to $1$. We recall that if a
sequence is periodic, a period of such a sequence is a non-zero integer.

In the following, we assume $x\geq 1$.

\begin{lemma}\label{lm1}
For $n\geq x+1$
\begin{equation}
{\displaystyle\sum^{n-1}_{i=x}\binom{i}{x}}=\binom{n}{x+1}.\nonumber
\end{equation}

\end{lemma}

\noindent The proof of the above is not difficult and can be done using induction. We omit the details here.

Let $k$ be the length of a period of sequence
$a_n\equiv\binom{n}{x}\pmod m$, meaning
$\binom{n+k}{x}\equiv\binom{n}{x}\pmod m$. Then we have,

\begin{lemma}\label{lm2}
\begin{equation}
{\displaystyle\sum^{y+mk-1}_{j=y}\binom{j}{x}}\equiv 0\pmod m.\nonumber
\end{equation}

\end{lemma}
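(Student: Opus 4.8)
The plan is to exploit that the summation range in Lemma~\ref{lm2} contains exactly $mk$ consecutive terms, i.e.\ precisely $m$ full periods of length $k$. First I would split the sum into $m$ consecutive blocks of length $k$ each and argue that every block contributes the same residue modulo $m$, so that the whole sum reduces to $m$ copies of a single block sum, hence a multiple of $m$.

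Concretely, I would begin with the reindexing
\[
\sum_{j=x}^{x+mk-1}\binom{j}{x}=\sum_{t=0}^{m-1}\sum_{r=0}^{k-1}\binom{x+tk+r}{x},
\]
where $t$ counts the block and $r$ the position inside it. For each fixed $r\in\{0,1,\dots,k-1\}$ I would then apply the periodicity relation $\binom{n+k}{x}\equiv\binom{n}{x}\pmod m$ successively at $n=x+r,\,x+r+k,\dots,\,x+r+(m-1)k$, obtaining $\binom{x+tk+r}{x}\equiv\binom{x+r}{x}\pmod m$ for all $t$. Substituting this collapses the block index and yields
\[
\sum_{j=x}^{x+mk-1}\binom{j}{x}\equiv\sum_{t=0}^{m-1}\sum_{r=0}^{k-1}\binom{x+r}{x}=m\sum_{r=0}^{k-1}\binom{x+r}{x}\equiv 0\pmod m,
\]
which is the asserted congruence. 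If a closed form for the single--period sum is wanted, Lemma~\ref{lm1} gives $\sum_{r=0}^{k-1}\binom{x+r}{x}=\binom{x+k}{x+1}$, so the claim can equivalently be phrased as $m\binom{x+k}{x+1}\equiv 0\pmod m$.

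The step I expect to be the main obstacle is not the algebra but the repeated use of the periodicity relation down to $n=x$. The definition only guarantees $a_{n+k}\equiv a_n\pmod m$ for $n$ beyond some threshold $N$, whereas the block argument needs the relation to hold and to be iterable for every $n\ge x$. To make this airtight I would first check that $\binom{n}{x}\pmod m$ is in fact \emph{purely} periodic, so that the relation is valid already from $n=x$; without this the first few blocks need not agree with block~$0$. Once pure periodicity from $n=x$ is established, the grouping is entirely routine and the factor $m$ in $m\sum_{r=0}^{k-1}\binom{x+r}{x}$ delivers the conclusion at once.
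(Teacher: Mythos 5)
Your proposal is correct and is essentially the paper's own argument: the paper likewise splits the $mk$-term sum into $m$ blocks of length $k$, reindexes each block, and uses the (iterated) periodicity relation $\binom{l+(i-1)k}{x}\equiv\binom{l}{x}\pmod m$ to reduce every block to the first one, giving a total of $mr\equiv 0\pmod m$. Your caveat about pure periodicity is sensible but moot here, since the paper's standing hypothesis before Lemma~\ref{lm2} is precisely that $\binom{n+k}{x}\equiv\binom{n}{x}\pmod m$ holds for all $n$ in the relevant range.
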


\begin{proof}
It is enough to notice the following

$$\sum_{j=y}^{y+mk-1}\binom{j}{x}=\sum_{i=0}^{m-1}\sum_{j=y}^{y+k-1}\binom{j+ik}{x}\equiv \sum_{i=0}^{m-1}r=mr \equiv 0~(\textup{mod}~m),$$

\noindent where $\sum_{j=y}^{y+k-1}\binom{j+ik}{x} \equiv r \pmod m$, for some $r$.
\end{proof}

In \cite{almps}, the authors mention without proof the following generalization of Theorem \ref{lpmod}.

\begin{theorem}\label{glpmod}
For a natural number $m=\prod^w_{i=1}p^{b_i}_i$, the sequence
$(a_n)$ such that $a_n\equiv\binom{n}{x}\pmod m$ has a period of
minimal length
$$
\ell(m,x)={\displaystyle\prod^w_{i=1}p^{\lfloor\log_{p_i}x\rfloor+b_i}_i}
=m{\displaystyle\prod^w_{i=1}p^{\lfloor\log_{p_i}x\rfloor}_i}.
$$
\end{theorem}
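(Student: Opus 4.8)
The plan is to reduce the general modulus to the prime-power case, reformulate "being a period" as an explicit divisibility condition, and then settle the prime-power case by a direct order-of-$p$ computation.

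\textbf{Step 1 (reduction to prime powers).} First I would note that an integer $N$ is a period of $(a_n)$ modulo $m=\prod_{i=1}^{k}p_i^{b_i}$ if and only if it is a period modulo each $p_i^{b_i}$ simultaneously, since by the Chinese Remainder Theorem $\binom{n+N}{x}\equiv\binom{n}{x}\pmod m$ holds precisely when the same congruence holds modulo every $p_i^{b_i}$. Hence the set of periods modulo $m$ is the intersection of the sets of periods modulo the $p_i^{b_i}$, and the minimal period modulo $m$ is the least common multiple of the minimal periods modulo the $p_i^{b_i}$. These will turn out to be powers of distinct primes, so their least common multiple is exactly the product in $l(m,x)$; it therefore suffices to prove that the minimal period of $\binom{n}{x}\pmod{p^{b}}$ is $p^{\lfloor\log_p x\rfloor+b}$.

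\textbf{Step 2 (reformulating "period" as divisibility).} The key algebraic input is Vandermonde's identity, giving $\binom{n+N}{x}-\binom{n}{x}=\sum_{j=1}^{x}\binom{N}{j}\binom{n}{x-j}=:g(n)$, a polynomial in $n$ of degree at most $x-1$. Since $g$ is integer-valued, the sequence $g(n)\bmod q$ is itself periodic in $n$, so $g\equiv 0\pmod q$ for all large $n$ is equivalent to $g\equiv 0\pmod q$ identically. Because the $\binom{n}{i}$ form a $\mathbb{Z}$-basis of the integer-valued polynomials (each coefficient is recovered from the values by finite differences) and the coefficient of $\binom{n}{x-j}$ in $g$ is $\binom{N}{j}$, this identity holds if and only if $q\mid\binom{N}{j}$ for all $1\le j\le x$. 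Thus $N$ is a period of $\binom{n}{x}\pmod{q}$ iff $q\mid\binom{N}{j}$ for every $1\le j\le x$, and for $q=p^{b}$ the problem becomes: find the least positive $N$ with $p^{b}\mid\binom{N}{j}$ for all $1\le j\le x$.

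\textbf{Step 3 (the arithmetic heart).} Put $e=\lfloor\log_p x\rfloor$, so $p^{e}\le x<p^{e+1}$. For sufficiency I would apply the identity $j\binom{N}{j}=N\binom{N-1}{j-1}$, which yields $\textup{ord}_p\binom{N}{j}\ge\textup{ord}_p N-\textup{ord}_p j$; since $j\le x<p^{e+1}$ forces $\textup{ord}_p j\le e$, any $N$ with $\textup{ord}_p N\ge e+b$ gives $\textup{ord}_p\binom{N}{j}\ge b$ for all $j$ in range, so every multiple of $p^{e+b}$ is a period. For necessity I would show that if $\textup{ord}_p N=v<e+b$ then the witness $j=p^{\min(v,e)}\le p^{e}\le x$ fails the divisibility: the same identity gives $\textup{ord}_p\binom{N}{j}=v-\min(v,e)+\textup{ord}_p\binom{N-1}{p^{\min(v,e)}-1}$, and a short Lucas computation (the base-$p$ digits of $N-1$ all equal $p-1$ below position $v$, matching those of $p^{\min(v,e)}-1$) shows the last term vanishes, whence $\textup{ord}_p\binom{N}{j}=\max(0,v-e)<b$. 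Therefore the periods modulo $p^{b}$ are exactly the multiples of $p^{e+b}$, the minimal one being $p^{e+b}$, and Step 1 assembles the full formula.

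The main obstacle is the necessity half of Step 3: ruling out any smaller period requires producing the correct witness $j$ and computing $\textup{ord}_p\binom{N}{j}$ exactly rather than merely bounding it, which is where Kummer's and Lucas' theorems on base-$p$ digits are essential. Once the divisibility reformulation of Step 2 is established, the remaining pieces — Vandermonde's identity and the passage from least common multiple to product over distinct primes — are routine.
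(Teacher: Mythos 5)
Your proposal is correct, but it follows a genuinely different route from the paper. The paper in fact gives no self-contained proof of Theorem \ref{glpmod}: it asserts that the proof of Theorem \ref{lpmod} (the case $x=m$) from \cite{mps2} carries over. That argument works directly with the representation $\binom{n}{m}=\prod_{i=0}^{m-1}(n-i)/m!$, tracking the ordinals $\vartheta_{p_j}(m)=\mathrm{ord}_{p_j}(m!)$ and counting how many of $m$ consecutive integers are divisible by each power $p_j^l$, in two stages: every period is a multiple of $\ell(m)$, and $\ell(m)$ is itself a period. You instead (i) reduce to prime powers by CRT, (ii) convert ``$N$ is a period mod $q$'' into the divisibility statement $q\mid\binom{N}{j}$ for all $1\le j\le x$ via Vandermonde's identity together with the $\mathbb{Z}$-basis property of the polynomials $\binom{n}{i}$, and (iii) compute $\mathrm{ord}_p\binom{N}{j}$ exactly using $j\binom{N}{j}=N\binom{N-1}{j-1}$ and Lucas' theorem. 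Step (ii) is the real gain: it identifies the full set of periods modulo $p^{b}$ as precisely the multiples of $p^{\lfloor\log_p x\rfloor+b}$, so the period property and minimality drop out of a single valuation computation, and general $x$ is handled directly rather than by adapting the $x=m$ argument; the price is the Lucas/Kummer machinery, which the factorial-counting route of \cite{mps2} avoids, and whose bookkeeping (the $c_j(i)$'s) the paper goes on to exploit in Theorem \ref{mxcj}. Two details you should make explicit when writing this up: the periodicity of $g(n)\bmod q$ that you invoke in Step 2 is exactly Theorem \ref{pmod}, which is proved independently, so there is no circularity; and the passage in Step 1 from the intersection of period sets to the lcm of minimal periods is legitimate precisely because your Steps 2--3 show each period set is exactly a set of multiples of a fixed prime power.
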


\noindent The proof follows from the proof of Theorem \ref{lpmod} as given in \cite{mps2} and also via Theorem 3 in \cite{lu}. An easy corollary mentioned in \cite{almps} is proved below.

\begin{corollary}
For $m=\prod^w_{i=1}p^{b_i}_i$ we have
$$
m^2\leq \ell(m)\leq m^{w+1}.
$$

\end{corollary}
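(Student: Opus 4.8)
The plan is to work directly from the factored form of $l(m)$ supplied by Theorem~\ref{lpmod}, namely
$$
l(m)=m\prod_{i=1}^k p_i^{\lfloor\log_{p_i}m\rfloor},
$$
and to bound the product $P:=\prod_{i=1}^k p_i^{\lfloor\log_{p_i}m\rfloor}$ from below by $m$ and from above by $m^k$. Once those two estimates for $P$ are established, multiplying each through by $m$ gives $m^2\le l(m)\le m^{k+1}$ at once, so the whole corollary reduces to a pair of one-line inequalities for $P$.

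For the upper bound I would treat each factor separately. Since the floor of a real number never exceeds the number itself, $\lfloor\log_{p_i}m\rfloor\le\log_{p_i}m$, and applying the increasing map $t\mapsto p_i^{t}$ gives $p_i^{\lfloor\log_{p_i}m\rfloor}\le p_i^{\log_{p_i}m}=m$. Multiplying these $k$ bounds yields $P\le m^k$, whence $l(m)=mP\le m^{k+1}$.

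For the lower bound the crucial observation is that $\lfloor\log_{p_i}m\rfloor\ge b_i$ for every $i$. This holds because $m=p_i^{b_i}\prod_{j\ne i}p_j^{b_j}\ge p_i^{b_i}$ (the omitted factors being at least $1$), so $\log_{p_i}m\ge b_i$, and since $b_i$ is an integer, rounding down preserves this bound. Hence $p_i^{\lfloor\log_{p_i}m\rfloor}\ge p_i^{b_i}$, and taking the product over all $i$ recovers $P\ge\prod_{i=1}^k p_i^{b_i}=m$; therefore $l(m)=mP\ge m^2$.

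The argument is short, and the only place demanding a little care—the mild ``obstacle''—is the step from $\log_{p_i}m\ge b_i$ to $\lfloor\log_{p_i}m\rfloor\ge b_i$, where one must use that $b_i$ is a \emph{nonnegative integer} so that the floor does not fall below it. Everything else is a routine appeal to the monotonicity of $t\mapsto p_i^{t}$.
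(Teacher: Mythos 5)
Your proposal is correct and follows essentially the same route as the paper: both work from the explicit formula $l(m)=m\prod_{i=1}^k p_i^{\lfloor\log_{p_i}m\rfloor}$, obtain the lower bound from $\lfloor\log_{p_i}m\rfloor\geq b_i$, and the upper bound from $\lfloor\log_{p_i}m\rfloor\leq\log_{p_i}m$ (the paper merely writes $p_i^{\log_{p_i}m}=m$ in its factored form $p_i^{b_i}\prod_{j\neq i}p_j^{b_j}$ before multiplying out). Your packaging of the upper bound, with each factor bounded directly by $m$, is a slightly cleaner presentation of the identical estimate.
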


\begin{proof}

We have $$\ell(m)=m\prod_{i=1}^w p_i^{\lfloor \log_{p_i}(m)\rfloor} \geq m\prod_{i=1}^w p_i^{\lfloor \log_{p_i}(p_i^{b_i})\rfloor}=m\prod_{i=1}^wp_i^{b_i}$$ and 
$$\ell(m)=m\prod_{i=1}^w p_i^{\lfloor \log_{p_i}(m)\rfloor} \leq m\prod_{i=1}^w p_i^{\log_{p_i}(m)}=m^{w+1}.$$
\end{proof}

\begin{remark}

Here $w\leq m-\varphi(m)$ where $\varphi$ is the Euler
totient function.

\end{remark}

We now formally give the following definition.

\begin{definition}[Minimal Period of a periodic sequence]\label{dfn}

The period of minimal length of a periodic sequence $(a_n)$ such that
$a_n\equiv\binom{n}{x}\pmod m$ with
$x\in\mathbb{N}$ and $m\in\mathbb{N}$, is the minimal
non-zero natural number $\ell(m,x)$ such that
for all positive integer $n$ we have
$$
\binom{n+\ell(m,x)}{x}\equiv\binom{n}{x}\pmod m
$$
where it is understood that
$$
\binom{n}{x}=\left\{\begin{array}{ccc}
0, & {\rm if} & 0\leq n<x\\
\frac{n!}{x!(n-x)!}, & {\rm if} & n\geq x.
\end{array}
\right.
$$

\end{definition}

\begin{remark}
If $x=0$, then $\ell(m,x=0)=1$ with $m\in\mathbb{N}$.
\end{remark}

From Definition \ref{dfn}
$$
\binom{\ell(m,x)}{x}\equiv\binom{\ell(m,x)+1}{x}\equiv\cdots\equiv
\binom{\ell(m,x)+x-1}{x}\equiv 0\pmod m.
$$
If $x>0$ ($x\in\mathbb{N}$), since any number
is divisible by $1$, we have
$$
\binom{x}{x}\equiv\binom{x+1}{x}\equiv\cdots\equiv\binom{2x-1}{x}\equiv
0\pmod 1.
$$
Regarding the definition of $\ell(m,x)$, since $x$ is the least
non-zero natural number which verifies this property, we can set
($x\in\mathbb{N}$)  $\ell(1,x)=1$.

The minimal period $\ell(m)$ of a sequence $(a_n)$ such that
$a_n\equiv\binom{n}{m}\pmod m$ with $m\in\mathbb{N}$ (see Theorem \ref{lpmod}) is given
by $\ell(m)=\ell(m,m)$.

Before we mention a few results we recall that $\log_ax=\frac{\ln x}{\ln a}$ and $\ln (1+x)={\displaystyle\sum_{k=1}^\infty} \frac{(-1)^{k+1}}{k}x^k$.

\begin{theorem}\label{lgt}
$$
\lfloor\log_p(x+1)\rfloor=\displaystyle\left\lfloor\log_p(x)+\frac{1}{x\ln p}\displaystyle\right\rfloor
=\left\{\begin{array}{ccc}
\lfloor\log_p(x)\rfloor, & {\rm if} & x\neq p^c-1;\\
\lfloor\log_p(x)\rfloor+1, & {\rm if} & x=p^c-1,
\end{array}
\right.
$$
with $c\in\mathbb{N}$.
\end{theorem}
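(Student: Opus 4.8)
The plan is to reduce both equalities to the defining property of the floor, namely that for $y>0$ one has $\lfloor\log_p y\rfloor=n$ exactly when $p^n\le y<p^{n+1}$, and to exploit that $x$ and $x+1$ are \emph{consecutive integers} while every power $p^n$ is itself an integer. As a preliminary rewriting I would note that $\frac{1}{x\ln p}=\log_p\!\big(e^{1/x}\big)$, so that $\log_p(x)+\frac{1}{x\ln p}=\log_p\!\big(xe^{1/x}\big)$. This turns the first claimed equality into $\lfloor\log_p(x+1)\rfloor=\lfloor\log_p(xe^{1/x})\rfloor$, which is now a comparison of two floors of logarithms of nearby real numbers.

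For this first equality I would establish the two-sided estimate $x+1<xe^{1/x}<x+2$ valid for every integer $x\ge 1$. The left inequality is just $\ln(1+\tfrac1x)<\tfrac1x$, which also records that $\log_p(x+1)<\log_p(xe^{1/x})$. For the right inequality I would observe that $g(x)=xe^{1/x}-x$ is decreasing on $[1,\infty)$, with $g(1)=e-1$ and $g(x)\to 1$ as $x\to\infty$, so that $1<g(x)\le e-1<2$. Granting this, the two floors can differ only if some power $p^n$ lies in the half-open interval $(x+1,\,xe^{1/x}]$; but $xe^{1/x}<x+2$ would then force $p^n$ to be an integer strictly between the consecutive integers $x+1$ and $x+2$, which is impossible. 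Hence the floors coincide, proving the first equality.

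For the piecewise formula I would compare $\lfloor\log_p(x+1)\rfloor$ with $\lfloor\log_p(x)\rfloor$ directly. These two floors differ precisely when a power of $p$ lies in the interval $(x,\,x+1]$; since $x$ and $x+1$ are consecutive integers, the only integer in this interval is $x+1$, so a power of $p$ occurs there if and only if $x+1=p^c$, that is $x=p^c-1$ for some $c\in\mathbb{N}$. In that case $\lfloor\log_p(x+1)\rfloor=\lfloor\log_p(p^c)\rfloor=c$, while the inequalities $p^{c-1}\le p^c-1<p^c$ (valid since $p^{c-1}(p-1)\ge 1$) give $\lfloor\log_p(x)\rfloor=c-1$, so the jump is exactly $1$. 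In every other case no power of $p$ is crossed and the two floors agree, which yields the displayed case distinction.

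The only step demanding genuine care is the two-sided estimate $x+1<xe^{1/x}<x+2$ underlying the first equality; everything else is bookkeeping with the defining inequality of the floor. I expect the right-hand bound $xe^{1/x}<x+2$ to be the crux, since it is exactly what prevents a power of $p$ from slipping into the gap $(x+1,\,xe^{1/x}]$, and it is precisely here that replacing $\log_p(1+\tfrac1x)$ by its first-order truncation $\frac{1}{x\ln p}$ turns out to be harmless.
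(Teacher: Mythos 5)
Your proof is correct, but it takes a genuinely different route from the paper's. The paper argues by cases according to whether $x+1$ is a power of $p$, and in each case controls $\log_p(x)$ and $\log_p(x)+\frac{1}{x\ln p}$ via series expansions of $\log_p(1\pm t)$ and explicit constants $\epsilon_p,\epsilon'_p,\epsilon''_p,\eta_p\in(0,1)$, repeatedly invoking ``standard analysis'' estimates to sandwich each quantity between $c$ and $c+1$; the conclusion is then read off from these numerical bounds. You instead turn both quantities into logarithms of explicit reals --- writing $\log_p(x)+\frac{1}{x\ln p}=\log_p\bigl(xe^{1/x}\bigr)$ --- and then rely on a single discreteness principle: for $a<b$, the floors $\lfloor\log_p a\rfloor$ and $\lfloor\log_p b\rfloor$ differ exactly when some integer power $p^n$ lies in $(a,b]$. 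Your one analytic input, $x+1<xe^{1/x}<x+2$ for $x\ge 1$, traps the relevant interval strictly between consecutive integers, so no power of $p$ can intervene and the first equality holds with no case split at all; applying the same principle to $(x,x+1]$ gives the dichotomy at $x=p^c-1$, with the jump of exactly $1$ coming from $p^{c-1}\le p^c-1<p^c$. The monotonicity claim you lean on for $g(x)=x(e^{1/x}-1)$ is easily certified (substituting $t=1/x$ gives $(e^t-1)/t$, a power series with positive coefficients, hence increasing in $t$), so the estimate is sound; alternatively the crude bound $e^{1/x}<1+2/x$ for $x\ge1$ would do. What your approach buys is uniformity and transparency: the epsilon bookkeeping disappears, and the real mechanism --- powers of $p$ are integers and cannot sit strictly between consecutive integers --- is laid bare. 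What the paper's computation buys is explicit quantitative control (e.g.\ $0<\epsilon_p<\frac{1}{(p^c-1)\ln p}$ locating $\log_p(p^c-1)$ precisely), but none of that extra precision is needed for the statement itself.
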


\noindent The proof is not difficult and is an easy calculus exercise, so we shall omit it here.

We now have the following

\begin{corollary}\label{crl}

$$
\ell(m,x+1)=\left\{\begin{array}{ccccc}
\ell(m,x),  & {\rm if} & x\neq p^c-1 & {\rm and} & p|m;\\
p\,\ell(m,x), & {\rm if} & x=p^c-1 & {\rm and} & p|m,\\
\end{array}
\right.
$$
with $x,m\in\mathbb{N}$.
\end{corollary}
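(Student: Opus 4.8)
The plan is to read both sides off the closed form for the minimal period supplied by Theorem~\ref{glpmod}. Writing $m=\prod_{i=1}^k p_i^{b_i}$, that theorem gives $\ell(m,x)=\prod_{i=1}^k p_i^{\lfloor\log_{p_i}x\rfloor+b_i}$ and, applied at $x+1$ in place of $x$, $\ell(m,x+1)=\prod_{i=1}^k p_i^{\lfloor\log_{p_i}(x+1)\rfloor+b_i}$. The whole statement therefore reduces to understanding the ratio
$$
\frac{\ell(m,x+1)}{\ell(m,x)}=\prod_{i=1}^k p_i^{\lfloor\log_{p_i}(x+1)\rfloor-\lfloor\log_{p_i}x\rfloor},
$$
so that I only need to control each exponent difference $\lfloor\log_{p_i}(x+1)\rfloor-\lfloor\log_{p_i}x\rfloor$ prime by prime.

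First I would invoke Theorem~\ref{lgt} for each prime $p_i$ dividing $m$. By that theorem the difference $\lfloor\log_{p_i}(x+1)\rfloor-\lfloor\log_{p_i}x\rfloor$ equals $0$ whenever $x\neq p_i^{c}-1$ for every $c$, and equals $1$ precisely when $x=p_i^{c}-1$ for some $c$. Thus each prime divisor $p_i\mid m$ contributes either the trivial factor $1$ or a single factor $p_i$ to the ratio, according to whether or not $x+1$ is a power of $p_i$.

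The one genuinely substantive observation, and the step I expect to carry the argument, is that $x+1$ can be a power of at most one prime: if $x+1=p^{c}$ then $p$ is the unique prime divisor of $x+1$, so $x\neq q^{c'}-1$ for every prime $q\neq p$ and every $c'$. Hence at most one exponent difference in the product can be nonzero, and this dichotomy produces exactly the two cases of the corollary. When $x=p^{c}-1$ for a prime $p\mid m$, this same uniqueness guarantees that no other prime divisor of $m$ has its floor changed, so only the $p$-factor is affected and $\ell(m,x+1)=p\,\ell(m,x)$. In the complementary situation, where $x+1$ is not a power of the prime $p\mid m$ in question (and, as the conclusion $\ell(m,x+1)=\ell(m,x)$ requires, not a power of any other divisor of $m$ either), every exponent difference vanishes and $\ell(m,x+1)=\ell(m,x)$.

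The remaining work is purely substitution and bookkeeping: once Theorem~\ref{lgt} and the uniqueness of the prime-power representation of $x+1$ are in hand, no estimates or series expansions are needed, and the two displayed cases follow immediately by comparing the two products term by term.
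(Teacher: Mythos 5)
Your proposal is correct and takes essentially the paper's own route: the paper dispatches this corollary in a single line, saying it "comes from Definition \ref{dfn} and Theorem \ref{lgt}," and your prime-by-prime comparison of the exponents in the closed form of Theorem \ref{glpmod}, together with the observation that $x+1$ can be a power of at most one prime (so at most one factor of the ratio $\ell(m,x+1)/\ell(m,x)$ can differ from $1$), is exactly the bookkeeping that one-line proof leaves implicit. You also correctly resolve the only real ambiguity in the statement by reading the first case as requiring that $x+1$ be a power of no prime divisor of $m$, which is what the conclusion $\ell(m,x+1)=\ell(m,x)$ needs.
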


The proof of the above corollary comes from Definition \ref{dfn} and Theorem \ref{lgt}.

From Lemma \ref{lm1}
$$
{\displaystyle\sum^{x+k-1}_{j=x}\binom{j}{x}}=\binom{x+k}{x+1}.
$$
The binomial c{\oe}fficient
$\binom{x+k}{x+1}$ is well defined for $x\in\mathbb{N}$. Nevertheless,
it was remarked in \cite{mps2} that we can extend possibly the definition of $\binom{n}{x}$ (where it is implied
that $0\leq x\leq n$) to negative $n$.

Below we discuss a few general results and give a few general comments.

Using Pascal's rule, we can observe that
$$
\binom{x+k}{x+1}+\binom{x+k}{x}=\binom{x+k+1}{x+1}.
$$
Since $\binom{x+k}{x}\equiv \binom{x}{x}\equiv 1\pmod m$, we obtain
\begin{equation}\label{eqs}
\binom{x+k}{x+1}+1\equiv\binom{x+k+1}{x+1}\pmod m.
\end{equation}

If $x\neq p^c-1$ and $p|m$, then from the corollary above, we have
$k=\ell(m,x)=\ell(m,x+1)$. So
$$
\binom{x+k+1}{x+1}\equiv \binom{x+1}{x+1}\equiv 1\pmod m,
$$
and hence
$$
\binom{x+\ell(m,x)}{x+1}\equiv 0\pmod m.
$$

If $x=p^c-1$ and $p|m$, then from the corollary above, we have
$pk=p\,\ell(m,x)=\ell(m,x+1)$. Now from Theorem \ref{genl} we have
for $x=p^c-1$ and $m=p$ a prime with $c\in\mathbb{N}$,
\begin{equation}\label{eqp}
\binom{x+k+1}{x+1}
=\binom{p^c+\ell(p,p^c-1)}{p^c}\equiv\left\lfloor\frac{p^c+\ell(p,p^c-1)}{p^c}\right\rfloor
\equiv\left\lfloor\frac{\ell(p,p^c-1)}{p^c}\right\rfloor+1\pmod p.
\end{equation}
From \eqref{eqs} and \eqref{eqp}  with
$x=p^c-1$, $k=\ell(m,x)$ and $m=p$ a prime we have
$$
\binom{p^c-1+\ell(p,p^c-1)}{p^c}\equiv
\left\lfloor\frac{\ell(p,p^c-1)}{p^c}\right\rfloor\pmod p.
$$
We have  $\ell(p,p^c)=p^{c+1}=p\,\ell(p,p^c-1)$, so it follows that $\ell(p,p^c-1)=p^c$ and hence
$\lfloor\frac{\ell(p,p^c-1)}{p^c}\rfloor=1$ for
$c\in\mathbb{N}$. Thus
$$
\binom{2p^c-1}{p^c}\equiv 1\pmod p.
$$

Thus, we now have the following result.

\begin{theorem}
For a prime $p$ and a natural number $c$, we have 
 $$
\binom{2p^c-1}{p^c}\equiv 1\pmod p.
$$
\end{theorem}

In general, if $x=p^c-1$ and $p|m$, then since
$\lfloor\log_{p}(p^c)\rfloor=c$,
and from Corollary \ref{crl} we have
$$
\ell(m,p^c-1)=\frac{\ell(m,p^c)}{p}=mp^{c-1}{\displaystyle
\prod_{i\in\llbracket 1,k\rrbracket\,|\,p_i\neq p}
p^{\lfloor\log_{p_i}(p^c)\rfloor}_i}.
$$
If $b_i={\rm ord}_{p_i}(m)=\lfloor\log_{p_i}(p^c)\rfloor$ for $i\in\llbracket
1,k\rrbracket\,|\,p_i\neq p$ and
$b={\rm ord}_p(m)$, we have
$$
\ell(m,p^c-1)=\frac{\ell(m,p^c)}{p}=m\,p^{c-1}{\displaystyle
\prod_{i\in\llbracket 1,k\rrbracket\,|\,p_i\neq p}
p^{b_i}_i}=m\,p^{c-1}\times\frac{m}{p^b}.
$$
So, we deduce that
$$
\ell(m,p^c-1)=m^2\,p^{c-b-1}=\frac{m^2}{p^{b+1}}p^c,
$$
and
$$
\ell(m,p^c)=m^2\,p^{c-b}.
$$

In particular, when $m=p^b$ we have
$\ell(m=p^b,p^c-1)=p^{b+c-1}$. And, we get
$\ell(m=p,p^c-1)=p^c$. If $c\geq {\rm ord}_p(m)+1$, then $\ell(m,p^c-1)$ is
divisible by $m^2$. If $b=c$, then $\ell(m=p^c,p^c-1)=\frac{m^2}{p}$.

Now from \eqref{eqp} and the preceeding paragraph we have,
$$
\binom{x+k+1}{x+1}
=\binom{p^c+\ell(m,p^c-1)}{p^c}\equiv\left\lfloor
\frac{p^c+\ell(m,p^c-1)}{p^c}\right\rfloor
\equiv\left\lfloor\frac{m^2}{p^{b+1}}\right\rfloor+1\pmod p.
$$
From \eqref{eqs} with
$x=p^c-1$, $k=\ell(m,x)$, and also from the fact that $d\equiv e\pmod m$ and $p|m$ implies that
$d\equiv e\pmod p$ (the converse is not always true), we have
for $p|m$,
$$
\binom{p^c-1+m^2_c\,p^{c-b-1}}{p^c}\equiv
\left\lfloor\frac{m^2_c}{p^{b+1}}\right\rfloor\pmod p.
$$

\subsection{{Remarks on Theorem \ref{lpmod}}}

In the proof of Theorem \ref{lpmod}, the authors in \cite{mps2} first proved that a period of
a sequence $(a_n)$ such that $a_n\equiv\binom{n}{m}\pmod m$ with
$m=\prod^k_{i=1}p^{b_i}_i$, should be a
multiple of the number
$\ell(m)=m\prod^k_{i=1}p^{\lfloor\log_{p_i}(m)\rfloor}_i$. Afterwards, it is
proved that $\ell(m)$ represents really the minimal period of
such a sequence namely for every natural number $n$,
$$
\binom{n+\ell(m)}{m}\equiv\binom{n}{m}\pmod m.
$$
For that, the authors notice that it suffices to prove
$$
\frac{\prod^{m-1}_{i=0}(n-i)}{\prod^k_{j=1}p^{\vartheta_{p_j}(m)}_j}
\equiv\frac{\prod^{m-1}_{i=0}(n+\ell(m)-i)}{\prod^k_{j=1}p^{\vartheta_{p_j}(m)}_j}\pmod m,
$$
where $\vartheta_{p_j}(m)$ is the $p_j$-adic ordinal of $m!$ defined as
\begin{equation}\label{eqa}
\vartheta_{p_j}(m)={\rm ord}_{p_j}(m!)={\displaystyle\sum_{l\geq 1}
\left\lfloor\frac{m}{p^l_j}\right\rfloor}={\displaystyle
\sum^{\lfloor\log_{p_j}(m)\rfloor}_{l=1}
\left\lfloor\frac{m}{p^l_j}\right\rfloor}.
\end{equation}
Thus to prove Theorem \ref{lpmod} it is sufficient to show
$$
\frac{\prod^{m}_{i=1}(n-i+1)}{\prod^k_{j=1}p^{\vartheta_{p_j}(m)}_j}
\equiv\frac{\prod^m_{i=1}(n+\ell(m)-i+1)}
{\prod^k_{j=1}p^{\vartheta_{p_j}(m)}_j}\pmod m.
$$

Then, the authors observe that among the numbers $n,n-1,\ldots,n-m+1$,
there are at least $\lfloor\frac{m}{p^l}\rfloor$ that are divisible by
$p^l$ for every positive integer $l$ and any prime $p$ which appears
in the prime factorization of $m$. In particular, if $p$ divides $m$,
we can notice that among the numbers $n,n-1,\ldots,n-m+1$ (which
represents $m$ consecutive numbers),
there are exactly $\lfloor\frac{m}{p}\rfloor=\frac{m}{p}$ that are divisible by
$p$ for any prime $p$ which appears in the prime factorization of $m$.

In the following, we define natural numbers $c_j(i)$ with
$i=1,2,\ldots,m$ and $j=1,2,\ldots,k$ by
$$
\vartheta_{p_j}(m)={\displaystyle\sum^{m}_{i=1}c_j(i)}
$$
such that the $c_j(i)$'s are functions of ${\rm ord}_{p_j}(m-i+1)$
namely $c_j(i)=({\rm ord}_{p_j}(n-i+1))$ and $i=1,2,\ldots,m$, $j=1,2,\ldots,k$. Also $
c_j(i)=0\,\,{\rm if}\,\,{\rm ord}_{p_j}(m-i+1)=0$.

We now state and prove the following result.

\begin{theorem}\label{mxcj}
If $
\max\left\{c_j(i)\,\left|\,
\vartheta_{p_j}(m)={\displaystyle\sum^{m}_{i=1}c_j(i)}\right.
\right\}\leq\lfloor\log_{p_j}(m)\rfloor$ then $
\vartheta_{p_j}(m)\leq
\lfloor\frac{m}{p_j}\rfloor\lfloor\log_{p_j}(m)\rfloor$. (In general, the converse is not always true.) Therefore, a necessary but not sufficient condition in order to
satisfy the inequality
$\vartheta_{p_j}(m)\leq\lfloor\frac{m}{p_j}\rfloor\lfloor\log_{p_j}(m)\rfloor$,
is
$$
c_j(i)\leq\lfloor\log_{p_j}(m)\rfloor,\,\,\,
\forall\,i\in\llbracket 1,m\rrbracket
$$
with $j=1,2,\ldots,k$.
\end{theorem}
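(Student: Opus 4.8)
The plan is to prove the stated implication by a direct counting-and-bounding estimate, and then to read off the concluding remark from the resulting logical picture. Throughout I would write $L:=\lfloor\log_{p_j}(m)\rfloor$ and keep in mind the decomposition $\vartheta_{p_j}(m)=\sum_{i=1}^{m}c_j(i)$, in which each $c_j(i)$ is attached to $\mathrm{ord}_{p_j}(n-i+1)$ and vanishes whenever $p_j\nmid(n-i+1)$.

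First I would isolate the single genuinely arithmetic ingredient, already recorded in the text: since $p_j$ is a prime factor of $m$ we have $p_j\mid m$, and hence among the $m$ consecutive integers $n,n-1,\ldots,n-m+1$ exactly $\lfloor m/p_j\rfloor=m/p_j$ are divisible by $p_j$. Consequently at most $\lfloor m/p_j\rfloor$ of the summands $c_j(i)$ can be nonzero; the fact that a given $c_j(i)$ may vanish even at some multiple of $p_j$ only lowers this count, and so is harmless for an upper bound.

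The core step is then a one-line estimate. Under the hypothesis $\max_i c_j(i)\le L$, every nonzero summand is bounded by $L$, so
$$\vartheta_{p_j}(m)=\sum_{i:\,p_j\mid(n-i+1)}c_j(i)\le\Big\lfloor\frac{m}{p_j}\Big\rfloor\,L=\Big\lfloor\frac{m}{p_j}\Big\rfloor\lfloor\log_{p_j}(m)\rfloor,$$
which is precisely the asserted inequality. The only input is the count of multiples of $p_j$; the rest is just bounding a sum by (number of terms)$\times$(largest term).

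The delicate part, and the step I expect to demand the most care, is the concluding clause rather than the estimate itself. To support the parenthetical assertion that the converse need not hold, I would observe that $\lfloor m/p_j^{\,l}\rfloor\le\lfloor m/p_j\rfloor$ for every $l\ge1$, while by \eqref{eqa} the sum $\vartheta_{p_j}(m)=\sum_{l=1}^{L}\lfloor m/p_j^{\,l}\rfloor$ has exactly $L$ terms, so the inequality $\vartheta_{p_j}(m)\le\lfloor m/p_j\rfloor\lfloor\log_{p_j}(m)\rfloor$ in fact holds with no hypothesis at all. Exhibiting an $n$ for which some $n-i+1$ is divisible by $p_j^{\,L+1}$ (possible since $p_j^{\,L+1}>m$ permits at most one such multiple among the $m$ consecutive integers, and such an $n$ is easily arranged) then gives a case with $c_j(i)=L+1>L$ in which the inequality still holds, so the pointwise bound on the $c_j(i)$ is not forced by the inequality. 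I would present this example explicitly and then state the relationship between the pointwise condition and the inequality with care, since this is exactly the place where the necessary/sufficient logic is easy to misread.
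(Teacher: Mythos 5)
Your proof is correct and takes essentially the same route as the paper: the implication rests on the same two facts the paper uses, namely that exactly $\lfloor m/p_j\rfloor$ of the $m$ consecutive integers $n,n-1,\ldots,n-m+1$ have nonzero $p_j$-adic ordinal (so $\vartheta_{p_j}(m)=\sum_{i=1}^m c_j(i)\le\lfloor m/p_j\rfloor\max_i c_j(i)$), together with the unconditional bound $\vartheta_{p_j}(m)\le\lfloor m/p_j\rfloor\lfloor\log_{p_j}(m)\rfloor$ obtained from \eqref{eqa} via $\lfloor m/p_j^l\rfloor\le\lfloor m/p_j\rfloor$, which also appears verbatim in the paper's proof. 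Your sketched counterexample for the failure of the converse goes beyond the paper (which merely asserts it); the one point to tighten there is that a term divisible by $p_j^{L+1}$ does not by itself force $c_j(i)=L+1$, since the $c_j(i)$ are only constrained to sum to $\vartheta_{p_j}(m)$, so you additionally need $\vartheta_{p_j}(m)\ge L+1$ (equivalently $m\ne p_j$) and must then choose the non-unique decomposition so that this large value is actually assigned.
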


\begin{proof}

The proof is immediate from \eqref{eqa} by noticing the following,

$${\rm ord}_{p_j}(m!)=\sum_{l=1}^{\lfloor\log_{p_j}(m)\rfloor}\displaystyle\left\lfloor\frac{m}{p_j^l}\displaystyle\right\rfloor \leq \sum_{l=1}^{\lfloor\log_{p_j}(m)\rfloor}\displaystyle\left\lfloor\frac{m}{p_j}\displaystyle\right\rfloor =\displaystyle\left\lfloor\frac{m}{p_j^l}\displaystyle\right\rfloor \lfloor \log_{p_j}(m)\rfloor.$$

\end{proof}

We can notice that this choice is not unique. But, we can observe that all
the choices for the $c_j(i)$'s are equivalent in the sense that the equality
$\vartheta_{p_j}(m)=\sum^{m}_{i=1}c_j(i)$ should hold, meaning that we can come
back to a decomposition of the value of $\vartheta_{p_j}(m)$ into
sum of positive numbers like the $c_j(i)$'s for which
$c_j(i)\leq\lfloor\log_{p_j}(m)\rfloor$ with $i=1,2,\ldots,m$. It turns out that this choice is suitable in order to prove that
$\ell(m)$ is the minimal period of sequences $(a_n)$ such that
$a_n\equiv\binom{n}{m}\pmod m$ with
$m=\prod^k_{i=1}p^{b_i}_i$ (with at least one non-zero $b_i$).

\begin{remark}
We have obviously

$$
\max\left\{c_j(i)\,\left|\,
\vartheta_{p_j}(m)={\displaystyle\sum^{m}_{i=1}c_j(i)}\right.
\right\}\geq 1.
$$
\end{remark}

The above discussion gives us a motivation to study the coefficients $c_j(i)$'s. We hope to address a few issues related to them and establish some interesting
results in a forthcoming paper.

\bibliographystyle{amsplain}

\end{document}